\documentclass[12pt]{amsart}
\usepackage{amsmath,amsfonts,amssymb,amsxtra,amscd,enumerate,amsthm}
\usepackage{color}
\usepackage{latexsym}
\usepackage{epsfig}
  \usepackage{fullpage}

\newcommand\les{\lesssim}

\newcommand\bj{\textbf{j}}

\newcommand\R{\mathbb{R}}

\newcommand\C{\mathbb{C}}
\newcommand\Z{\mathbb{Z}}
\newcommand\N{\mathbb{N}}

\newcommand{\calC}{\mathcal C}
\newcommand{\calE}{\mathcal E}
\newcommand{\calH}{\mathcal H}

\newcommand{\ls}{{\lesssim}}

\newcommand\la{\langle}
\newcommand\ra{\rangle}
\newtheorem{theo}{Theorem}
\numberwithin{theo}{section} 
\newtheorem{lema}[theo]{Lemma}

\newtheorem{defin}[theo]{Definition}

\newtheorem{con}{Condition}
\newtheorem{cj}{Conjecture}

\numberwithin{equation}{section}

\begin{document}
\title{The multilinear restriction estimate: a short proof and a refinement }

\author[I. Bejenaru]{Ioan Bejenaru} \address{Department
  of Mathematics, University of California, San Diego, La Jolla, CA
  92093-0112 USA} \email{ibejenaru@math.ucsd.edu}

\begin{abstract} We provide an alternative and self contained proof of the main result of Bennett, Carbery, Tao in \cite{BeCaTa}
regarding the multilinear restriction estimate. The approach is inspired by the recent result of Guth \cite{Gu-easy} about the Kakeya version of multilinear restriction estimate. At lower levels of multilinearity we provide a refined estimate in the context of small support for one of the terms involved. 
\end{abstract}

\subjclass[2010]{42B15 (Primary);  42B25 (Secondary)}
\keywords{Multilinear restriction estimates, Induction on scale}

\maketitle

\section{Introduction}

In \cite{BeCaTa} Bennett, Carbery and Tao established almost optimal multilinear restriction estimates. 
In this paper we provide an alternative proof of their main result and establish a refined version in the context of lower levels of multilinearity.
For a more in-depth introduction to the subject we refer the interested reader to \cite{BeCaTa}. 

For $n \geq 1$, let $U \subset \R^{n}$ be an open, bounded neighborhood of the origin and let $\Sigma: U \rightarrow \R^{n+1}$
be a smooth parametrization of a $n$-dimensional submanifold of $\R^{n+1}$. To this we associate the operator $\calE$ defined 
by
\[
\calE f(x) = \int_U e^{i x \cdot \Sigma(\xi)} f(\xi) d\xi. 
\]
with apriori domain $L^1(U)$. A fundamental question in Harmonic Analysis is the full range of $p,q$ for which 
$\calE : L^p(U) \rightarrow L^q(\R^{n+1})$ holds true. The original formulation of this question is in terms of the adjoint of $\calE$
and is known as the Restriction Conjecture, see \cite{BeCaTa} for more details. 

Multilinear versions of the restriction estimates have emerged in literature for various reasons. We start with the
$n+1$-multilinear restriction estimate. For  $1 \leq i \leq n+1$, let $\Sigma_i:U_i \rightarrow \R^{n+1}$ be smooth parametrizations as above,
satisfying
\begin{equation} \label{smooth}
\| \partial^\alpha \Sigma_i \|_{L^\infty(U_i)} \les_\alpha 1. 
\end{equation} 
and let $\calE_j$ be their associated operators. For $\zeta_i \in \Sigma_i(U_i)$, let $N_i(\zeta_i)$ be the unit normal at the surface $\Sigma_i(U_i)$;
the orientation is unimportant, hence it does not matter which unit normal is chosen. We assume the following transversality condition: there exists $\nu > 0$
such that
\begin{equation} \label{normal}
| det(N_1(\zeta_1), .., N_{n+1}(\zeta_{n+1}))| \geq \nu
\end{equation}
for all choices $\zeta_i \in \Sigma_i(U_i)$. The multilinear restriction conjecture is the following
\begin{cj} \label{MRCJ}
Suppose that \eqref{smooth} and \eqref{normal} hold true. Then the following holds true
\begin{equation} \label{MRE}
\| \Pi_{i=1}^{n+1} \calE_i f_i \|_{L^\frac{2}n(\R^{n+1})} \leq C \Pi_{i=1}^{n+1} \| f_i \|_{L^2(U_i)}. 
\end{equation}
where the constant $C$ depends on finitely many derivatives of $\Sigma_i$, $U_i$ ($i=1,..,n+1$)  and $\nu, n$.
\end{cj}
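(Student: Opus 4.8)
The plan is to deduce \eqref{MRE} from a scale-localized estimate, prove that estimate by induction on the scale, and feed the induction with the loss-free endpoint multilinear Kakeya inequality in the spirit of \cite{Gu-easy}.

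First I would reduce \eqref{MRE} to: for every $R\ge1$,
\begin{equation}\label{loc}
\Big\| \Pi_{i=1}^{n+1} \calE_i f_i \Big\|_{L^{2/n}(B_R)} \le C \,\Pi_{i=1}^{n+1}\|f_i\|_{L^2(U_i)},
\end{equation}
with $C$ \emph{independent of $R$}. Indeed, for $f_i$ bounded and compactly supported in $U_i$ the function $\Pi_i\calE_if_i$ is bounded and continuous, and $\big\|\Pi_i\calE_if_i\big\|_{L^{2/n}(B_R)}$ increases to $\big\|\Pi_i\calE_if_i\big\|_{L^{2/n}(\R^{n+1})}$ as $R\to\infty$; so \eqref{MRE} follows from \eqref{loc} by monotone convergence, and then for arbitrary $f_i\in L^2(U_i)$ by density together with Fatou's lemma (each $\calE_i$ maps $L^2(U_i)$ boundedly into $L^\infty(\R^{n+1})$ since $U_i$ is bounded). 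The delicate point — and the reason the endpoint exponent $2/n$ is hard — is that \eqref{loc} must hold with \emph{no} loss in $R$: the direct induction on scale of \cite{BeCaTa} produces \eqref{loc} only with an $R^\e$ factor, and this cannot be summed, since for $q=2/n\le1$ the annular contributions $\big\|\Pi_i\calE_if_i\big\|_{L^q(B_{2R}\setminus B_R)}$ decay at no rate in $R$. (One might alternatively hope to strip an $R^\e$ loss off \eqref{loc} by an $\e$-removal argument, using $q\le1$ and the fact that $\Pi_i\calE_if_i$ has frequency support in the fixed compact set $\textstyle\sum_i\Sigma_i(U_i)$; but the cleanest route is to prove \eqref{loc} loss-free outright.)

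For \eqref{loc} I would induct on $R$ along the lines of Guth's proof of multilinear Kakeya. At scale $R$: partition each $U_i$ into caps $\omega$ of diameter $R^{-1/2}$ and write $f_i=\sum_\omega f_{i,\omega}$; on $B_R$ the wave packet $\calE_if_{i,\omega}$ is, up to rapidly decaying tails, a modulated indicator of a slab $T_{i,\omega}$ of dimensions $R^{1/2}\times\cdots\times R^{1/2}\times R$ whose long axis points in the normal direction $N_i(\omega)$, with amplitudes obeying $\sum_\omega|c_{i,\omega}|^2\sim\|f_i\|_{L^2}^2$. Using $2/n\le1$ and pigeonholing the caps into boundedly many classes of comparable amplitude, the left side of \eqref{loc} is dominated by $\Pi_i\|f_i\|_{L^2}$ times a power of the multilinear Kakeya quantity $\int_{B_R}\Pi_i\big(\sum_{T_i}\mathbf{1}_{T_i}\big)^{1/n}$; because \eqref{normal} makes the directions $N_i(\omega)$ quantitatively transverse, the \emph{endpoint} multilinear Kakeya inequality (with constant independent of $R$, as in \cite{Gu-easy}) applies and yields exactly the expected power of $R$. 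Finally, reinstating the tails and the sub-$R^{1/2}$ frequency information by parabolically rescaling each $R^{-1/2}$-cap to the unit scale turns the scale-$R$ slab geometry into the extension problem at scale $R^{1/2}$ and produces a recursion $C(R)\les C(R^{1/2})\cdot(\text{factors})$; at $q=2/n$ the number of caps, the normalization in multilinear Kakeya and the Jacobian of the rescaling balance exactly, so the factors are genuinely bounded and the recursion closes with $C(R)\les1$.

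The main obstacle is keeping \emph{every} step loss-free. Pigeonholing over amplitude classes and over the tile translates, and the handling of the Schwartz tails of the wave packets, each cost a factor of $\log R$ if done carelessly; the entire gain of this approach over \cite{BeCaTa} lies in organizing these so that they contribute only $O(1)$ — or at worst a factor that is summable against the annular decomposition of $\R^{n+1}$ — so that \eqref{loc}, hence \eqref{MRE}, holds with no loss. One must also verify that the parabolic rescaling preserves \eqref{smooth} and the transversality constant $\nu$ uniformly down the induction; this holds because at the scales in play the rescaled surfaces are $C^\infty$-close to affine images of the original ones.
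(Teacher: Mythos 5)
There is a fundamental gap here: the statement you are proving is Conjecture \ref{MRCJ}, which the paper explicitly records as an open problem (``The Conjecture \ref{MRCJ} corresponds to obtaining \eqref{AMRE} with $\epsilon=0$ and it is currently an open problem''). The paper proves only the near-optimal version, Theorem \ref{MBCT}, with the $R^\epsilon$ loss, and it does so by a route different from yours (a phase-space decomposition into lattice parallelepipeds plus the discrete Loomis--Whitney inequality, rather than wave packets plus multilinear Kakeya). Your proposal does not close the gap between the two statements.

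Concretely, the step that fails is the claim that the recursion ``closes with $C(R)\lesssim 1$.'' Every version of this induction on scale --- yours, the one in \cite{BeCaTa}, and the one in this paper --- produces a relation of the form $C(R)\le K\cdot C(R^{1/2})$ (or $A(\delta^{-1}R)\le C\,A(R)$) with a per-step constant $K>1$ coming from the cap/tile pigeonholing, the Schwartz tails, and the transversality constants. Merely knowing that $K$ is ``genuinely bounded'' is not enough: iterating from scale $R$ down to scale $O(1)$ takes $\sim\log\log R$ steps and yields $C(R)\le K^{\log_2\log R}=(\log R)^{\log_2 K}$, which is unbounded; to get $C(R)\lesssim 1$ you would need the per-step factor to be $1+o(1)$ in a summable way, and arranging that is precisely the content of the open problem, not a bookkeeping issue. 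Two further points: the loss-free endpoint multilinear Kakeya inequality is the result of \cite{Gu-main} (via algebraic topology), not \cite{Gu-easy}, which only gives the $R^\epsilon$ version; and even granting endpoint multilinear Kakeya, no loss-free transference to the multilinear \emph{restriction} estimate is known --- the known machinery of \cite{BeCaTa} loses $R^\epsilon$ in that passage as well. Your reduction of \eqref{MRE} to a uniform-in-$R$ local estimate by monotone convergence is fine; everything after that asserts, rather than proves, the conjecture.
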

The regularity assumptions in \cite{BeCaTa} are of type $C^2$,
but we are not interested in optimizing this aspect in the current paper. 

The main result in \cite{BeCaTa} is a near-optimal version of the above conjecture:
\begin{theo}[Theorem 1.16, \cite{BeCaTa}] \label{MBCT}
Under the assumptions in Conjecture \ref{MRCJ}, for any $\epsilon > 0$, there is $C(\epsilon)$ such that 
the following holds true
\begin{equation} \label{AMRE}
\| \Pi_{i=1}^{n+1} \calE_i f_i \|_{L^\frac{2}n(B(0,R))} \leq C(\epsilon) R^\epsilon \Pi_{i=1}^{n+1} \| f_i \|_{L^2(U_i)}, 
\quad \forall f_i \in L^2(U_i), i=1,..,n+1,
\end{equation}
where $B(0,R) \subset \R^{n+1}$ is the ball of radius $R$ centered at the origin. 
\end{theo}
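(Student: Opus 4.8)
The proof runs an induction on the spatial scale $R$, following the philosophy of Guth's argument for the multilinear Kakeya inequality. For dyadic $R\geq 1$ let $Q(R)$ denote the smallest constant for which
\[
\Big\| \prod_{i=1}^{n+1} \calE_i f_i \Big\|_{L^{2/n}(B(0,R))} \leq Q(R) \prod_{i=1}^{n+1} \| f_i \|_{L^2(U_i)}
\]
holds uniformly over all data $f_i$ and all surfaces obeying \eqref{smooth} and \eqref{normal} with a fixed $\nu$; a crude bound (Cauchy--Schwarz together with the $L^2$ mapping property of a single $\calE_i$) shows $Q(R)<\infty$ for every fixed $R$. It then suffices to establish, for each $\e>0$, a gain-in-scale inequality
\[
Q(R) \les_{\e} R^{\e}\, Q(R^{1/2}),
\]
since iterating it $O(\log\log R)$ times and renaming $\e$ gives $Q(R)\les_\e R^\e$, which is \eqref{AMRE}. (Equivalently, one may relate the scales $R$ and $R/K$ for a fixed large $K$ and iterate $O(\log_K R)$ times.)

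The first ingredient is a wave-packet reduction turning \eqref{AMRE} into a weighted tube-counting statement. Partition each $U_i$ into caps $\theta$ of diameter $R^{-1/2}$ and decompose $f_i|_\theta$ spatially, obtaining $\calE_i f_i=\sum_{T\in\mathbb{T}_i}\calE_i f_{i,T}$, where $T$ runs over a family of tubes of dimensions $R^{1/2}\times\cdots\times R^{1/2}\times R$ with long axis along the normal $N_i(\theta)$. Each $\calE_i f_{i,T}$ is, modulo rapidly decaying tails, concentrated on $T$, is essentially constant on every ball of radius $R^{1/2}$, and $L^2$-orthogonality gives $\|f_i\|_{L^2(U_i)}^2\sim\sum_{T\in\mathbb{T}_i}\|f_{i,T}\|_{L^2}^2$. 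Hence on each ball $B$ of radius $R^{1/2}$ in a cover of $B(0,R)$ one may replace $|\calE_i f_i|$ by a constant comparable to $\big(\sum_{T\cap B\neq\emptyset}|c_T|^2\big)^{1/2}$, where $|c_T|\sim R^{-n/4}\|f_{i,T}\|_{L^2}$ is the normalised amplitude of the packet; this recasts the target inequality as an estimate for products of $\ell^2$-aggregates of tube indicators.

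The heart of the matter is the interaction of the two scales $R^{1/2}$ and $R$. Group the $R^{-1/2}$-caps $\theta$ into caps $\tau$ of diameter $R^{-1/4}$; over a fixed $\tau$ the normals $N_i(\theta)$ ($\theta\subset\tau$) are parallel up to $O(R^{-1/4})$, so $g_{i,\tau}:=\calE_i\big(\sum_{\theta\subset\tau}f_{i,\theta}\big)$ is essentially constant on translates of a \emph{coarse tube} of dimensions $R^{1/4}\times\cdots\times R^{1/4}\times R^{1/2}$ with long axis along $N_i(\tau)$. On the one hand, the coarse counting functions $\sum_{\tilde T\in\widetilde{\mathbb{T}}_i}\chi_{\tilde T}$ of the $n+1$ families are transverse thanks to \eqref{normal}, so the multilinear Kakeya / Loomis--Whitney inequality controls $\int \prod_{i}\big(\sum_{\tilde T\in\widetilde{\mathbb{T}}_i}\chi_{\tilde T}\big)^{1/n}$ by $\prod_i(\#\widetilde{\mathbb{T}}_i)^{1/n}$ up to at worst an $R^{\e}$ factor, which bounds the coarse overlap of the families across the $R^{1/2}$-balls. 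On the other hand, on the support $\bigcap_i\tilde T_i$ of a single configuration of coarse tubes — a set of bounded eccentricity by transversality — a parabolic rescaling sending this set to a ball of radius $R^{1/2}$ turns the surfaces $\Sigma_i$ restricted to the fine caps $\theta\subset\tau$ into admissible surfaces at scale $R^{1/2}$ with comparable transversality, so the interaction of the corresponding fine packets is governed by $Q(R^{1/2})$. Assembling these two bounds with the $L^2$-orthogonality of the first step yields exactly $Q(R)\les_\e R^{\e}\,Q(R^{1/2})$.

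The main obstacle is the clean composition of the two mechanisms: one must verify that plugging the induction hypothesis at scale $R^{1/2}$ on each configuration of coarse tubes into the Loomis--Whitney bound for the coarse tubes costs nothing beyond the product $R^{\e}Q(R^{1/2})$, which ultimately rests on the geometric fact that at scale $R^{-1/4}$ each $\Sigma_i$ is a small perturbation of an affine hyperplane, so that its parabolic rescaling is again admissible with a comparable transversality constant. A secondary, routine point is the bookkeeping of the Schwartz tails produced by the frequency localisations at the two scales; these are summed as geometric series in the usual way. With the gain-in-scale inequality in hand, the iteration described above finishes the proof.
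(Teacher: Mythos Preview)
Your outline follows the Bennett--Carbery--Tao route of deducing the multilinear restriction estimate from the multilinear Kakeya estimate via a wave-packet induction on scales, whereas the paper gives a direct, self-contained argument that avoids Kakeya entirely. The paper first localizes each $U_i$ to a cap of diameter $\delta$, so that on each piece the normal $N_i$ is constant up to $O(\delta)$; it then runs an induction $R\to\delta^{-1}R$ (a fixed multiplicative step, not $R\to R^{1/2}$), using a phase-space decomposition of $\mathcal F_i^{-1}f_i$ together with the commutator identity \eqref{com} to show that the contribution of $\calE_i f_i$ on a cube $q\in\calC(R)\cap Q$ is essentially carried by the piece of $\mathcal F_i^{-1}f_i$ sitting over $\pi_{N_i}q$. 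Because all the implicit tubes in family $i$ now point in the single direction $N_i$ (the $O(\delta)$ angular spread is harmless over a distance $\delta^{-1}R$), the elementary \emph{discrete Loomis--Whitney} inequality \eqref{LWd} suffices to pass from scale $R$ to scale $\delta^{-1}R$, giving $A(\delta^{-1}R)\leq C\,A(R)$ and hence the $R^\epsilon$ loss after choosing $\delta=C^{-1/\epsilon}$.

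Your sketch has a genuine gap at the step where you invoke ``the multilinear Kakeya / Loomis--Whitney inequality'' for the coarse tubes $\tilde T\in\widetilde{\mathbb T}_i$. These two inequalities are not interchangeable here. Within each family $\widetilde{\mathbb T}_i$ your coarse tubes point in the directions $N_i(\tau)$ as $\tau$ ranges over $R^{-1/4}$-caps covering all of $U_i$, so their directions vary by $O(1)$; the flat Loomis--Whitney inequality, which requires each family to lie in a fixed direction, therefore does not apply. What does apply is the full near-optimal multilinear Kakeya inequality, but that is precisely the deep input whose proof is of comparable difficulty to the theorem you are trying to establish, and it is exactly what the paper's argument is designed to circumvent. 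A secondary issue is your ``parabolic rescaling'' step: for general hypersurfaces satisfying only \eqref{smooth} and \eqref{normal} there is no canonical parabolic rescaling, and no single affine map will simultaneously normalize $n+1$ unrelated surfaces; in the standard deduction of restriction from Kakeya one simply applies the scale-$R^{1/2}$ hypothesis on each $R^{1/2}$-ball without any rescaling, so this step is both unnecessary and not well defined in the stated generality.
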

The Conjecture \ref{MRCJ} corresponds to obtaining \eqref{AMRE} with $\epsilon=0$ and it is currently an open problem. 
There is a multilinear Kakeya version of both \eqref{MRE} and \eqref{AMRE} which are slightly weaker statements than the corresponding
multilinear restriction ones. In \cite{BeCaTa} the authors prove the multilinear Kakeya version of \eqref{AMRE} and then obtain \eqref{AMRE}
from it by using a different argument. In a striking result, the multilinear Kakeya version of Conjecture \ref{MRCJ} 
was established by Guth in \cite{Gu-main} using tools from algebraic topology. 

In \cite{BeCaTa} the authors obtain a similar result for lower levels of multilinearity. One considers a similar setup with $k$
surface where $2 \leq k< n+1$. The assumption \eqref{smooth} is replaced by 
\begin{equation} \label{normal2}
vol (N_1(\zeta_1), .., N_{k}(\zeta_{k})) \geq \nu.
\end{equation}
for all choices $\zeta_i \in \Sigma_i(U_i)$. Here by $vol (N_1(\zeta_1), .., N_{k}(\zeta_{k}))$ we mean the volume of the $k$-dimensional parallelepiped spanned
by the vectors $N_1(\zeta_1), .., N_{k}(\zeta_{k})$. 
\begin{theo}[Section 5, \cite{BeCaTa}] \label{TL}
Assume $\Sigma_i, i=1,..,k$ satisfy \eqref{smooth} and \eqref{normal2}. Then for any $\epsilon > 0$, there is $C(\epsilon)$ such that 
the following holds true
\begin{equation} \label{LAMRE}
\| \Pi_{i=1}^{k} \calE_i f_i \|_{L^\frac{2}{k-1}(B(0,R))} \leq C(\epsilon) R^\epsilon \Pi_{i=1}^{k} \| f_i \|_{L^2(U_i)},
\quad \forall f_i \in L^2(U_i), i=1,..,k.
\end{equation}
\end{theo}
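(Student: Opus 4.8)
The plan is to deduce Theorem~\ref{TL} from Theorem~\ref{MBCT} \emph{in dimension $k$} --- that is, from \eqref{AMRE} with $n$ replaced by $k-1$, which is the $k$-linear estimate for $k=(k-1)+1$ surfaces in $\R^k$ --- by a slicing argument in the $n+1-k$ ``extra'' spatial directions. First one normalizes: by an affine change of variables in $\R^{n+1}$ (which enlarges $B(0,R)$ by a factor $C(\nu,n)$, harmlessly absorbed into $C(\epsilon)$) together with a subdivision of each $U_i$ into finitely many pieces --- the number controlled by $n$, $k$, $\nu$ and the constants in \eqref{smooth} --- one reduces to the case where $N_i(\zeta_i)$ lies within a small angle of $e_i$ for all $\zeta_i\in\Sigma_i(U_i)$ and $1\le i\le k$. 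Then, after reparametrizing $\Sigma_i$ by its coordinates other than the $i$-th, $\Sigma_i$ is a graph in the $e_i$-direction; writing $\R^{n+1}=\R^k_y\times\R^{n+1-k}_z$ and, correspondingly, $\xi=(\xi',\xi'')\in\R^{k-1}\times\R^{n+1-k}$ for the parameter variable, one has $\Sigma_i^z(\xi)=\xi''$, while for each fixed $\xi''$ the map $\sigma_i^{\xi''}(\xi'):=\Sigma_i^y(\xi',\xi'')$ parametrizes a $(k-1)$-dimensional graph in $\R^k$ in the $e_i$-direction. Consequently, for $1\le i\le k$,
\[
\calE_i f_i(y,z)=\int e^{iz\cdot\xi''}\,h_{i,y}(\xi'')\,d\xi'',\qquad h_{i,y}(\xi''):=\big(E_i^{\xi''}f_i(\cdot,\xi'')\big)(y),
\]
where $E_i^{\xi''}$ denotes the $\R^k$-extension operator of $\sigma_i^{\xi''}$ and $f_i(\cdot,\xi'')$ the function $\xi'\mapsto f_i(\xi',\xi'')$. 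The crucial point is that the hypersurfaces $\sigma_1^{\xi_1''},\dots,\sigma_k^{\xi_k''}$ satisfy \eqref{smooth} and are $k$-transversal in $\R^k$ --- their $\R^k$-normals lying within a small angle of $e_1,\dots,e_k$ --- \emph{uniformly in} $\xi_1'',\dots,\xi_k''$; this is the only place where \eqref{smooth} and \eqref{normal2} enter.

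Next I would remove the $z$-integration. Since each $U_i$ is bounded, $h_{i,y}$ is supported in a fixed bounded set $V_i\subset\R^{n+1-k}$, and $\calE_i f_i(y,\cdot)$ is the Fourier transform of $h_{i,y}$. Bounding the $z$-integral over $\{|z|\le R\}$ by the one over all of $\R^{n+1-k}$, and then using Hölder's inequality in $z$, Hausdorff--Young, and Hölder on the bounded set $V_i$, one obtains
\[
\int_{\R^{n+1-k}}\prod_{i=1}^k\big|\calE_i f_i(y,z)\big|^{\frac2{k-1}}\,dz\ \les\ \prod_{i=1}^k H_i(y)^{\frac2{k-1}},\qquad H_i(y):=\Big(\int_{V_i}|h_{i,y}(\xi'')|^2\,d\xi''\Big)^{\frac12},
\]
the exponents matching because $\tfrac2{k-1}\cdot k\ge 2$ keeps Hausdorff--Young applicable. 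Integrating in $y$ over $\{|y|\le R\}$ then reduces Theorem~\ref{TL} to the $L^2(V_i)$-valued multilinear restriction estimate in $\R^k$:
\[
\Big\|\prod_{i=1}^k H_i\Big\|_{L^{\frac2{k-1}}(\{|y|\le R\})}\ \les_\epsilon\ R^\epsilon\prod_{i=1}^k\|f_i\|_{L^2(U_i)}.
\]

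To obtain this from the scalar estimate of Theorem~\ref{MBCT}, write $H_i(y)^2=\int_{V_i}\big|E_i^{\xi_i''}f_i(\cdot,\xi_i'')(y)\big|^2\,d\xi_i''$, so that the left-hand side raised to the power $\tfrac2{k-1}$ equals
\[
\Big\|\int_{V_1\times\cdots\times V_k}\prod_{i=1}^k\big|E_i^{\xi_i''}f_i(\cdot,\xi_i'')\big|^2\,d\xi_1''\cdots d\xi_k''\,\Big\|_{L^{\frac1{k-1}}(\{|y|\le R\})}^{\frac1{k-1}}.
\]
Since $\tfrac1{k-1}\le1$, the inequality $\big\|\int F\,d\mu\big\|_q^q\le\int\|F\|_q^q\,d\mu$, valid for $0<q\le1$, pulls the $\xi_1''\cdots\xi_k''$-integral outside; for each fixed $(\xi_1'',\dots,\xi_k'')$ the resulting integrand is $\big\|\prod_i\big|E_i^{\xi_i''}f_i(\cdot,\xi_i'')\big|\big\|_{L^{2/(k-1)}(\{|y|\le R\})}^{2/(k-1)}$, which by Theorem~\ref{MBCT} applied in $\R^k$ to the uniformly smooth and uniformly transversal family $\sigma_1^{\xi_1''},\dots,\sigma_k^{\xi_k''}$ is $\les_\epsilon\big(R^\epsilon\prod_i\|f_i(\cdot,\xi_i'')\|_{L^2}\big)^{2/(k-1)}$ with a constant independent of the $\xi_i''$. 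Integrating back in $\xi_1''\cdots\xi_k''$ and applying Hölder on each $V_i$ (with exponent $\tfrac2{k-1}\le2$) reassembles $\int_{V_i}\|f_i(\cdot,\xi_i'')\|_{L^2}^2\,d\xi_i''=\|f_i\|_{L^2(U_i)}^2$, which completes the proof.

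I expect the main obstacle to be the small exponent: for $k\ge3$ one has $\tfrac2{k-1}\le1$, so Minkowski's integral inequality, the triangle inequality and naive interpolation are unavailable, and the whole argument must be carried instead by the subadditivity of $\|\cdot\|_q^q$ for $0<q\le1$. This is exactly what makes the passage to the vector-valued estimate and the final reassembly of the $L^2$-norms lossless --- costing only the $R^\epsilon$ inherited from Theorem~\ref{MBCT} --- rather than producing spurious powers of $R$; indeed, a crude treatment of the $z$-integral over $\{|z|\le R\}$ (instead of upsizing it to all of $\R^{n+1-k}$ and exploiting the Fourier structure) would already cost a factor $R^{n+1-k}$. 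A secondary, bookkeeping point is to check that the $C^2$-bounds and transversality constants of the sliced hypersurfaces $\sigma_i^{\xi''}$ are genuinely uniform in the transverse parameter $\xi''$, so that the constant produced by the sliced application of Theorem~\ref{MBCT} does not depend on $\xi''$.
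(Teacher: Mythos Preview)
Your argument is correct. The slicing reduction works exactly as you describe: after the graph normalisation, the $z$-variable enters only through the factor $e^{iz\cdot\xi''}$, so $\calE_i f_i(y,\cdot)$ is a genuine Fourier transform in $\R^{n+1-k}$ of a compactly supported function; H\"older in $z$ followed by Hausdorff--Young with exponent $2k/(k-1)\ge2$ and H\"older on the bounded $V_i$ gives the $H_i(y)$ bound without any power of $R$. The passage from $\prod_i H_i$ to the scalar $k$-linear estimate in $\R^k$ via subadditivity of $\|\cdot\|_{1/(k-1)}^{1/(k-1)}$ is also clean, and the uniform transversality of the slices $\sigma_i^{\xi_i''}$ follows because the $\R^k$-normal to $\sigma_i^{\xi''}$ is just the normalised $y$-component of $N_i$, which stays close to $e_i$.

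This is, however, a genuinely different route from the paper's. The paper does not reduce Theorem~\ref{TL} to Theorem~\ref{MBCT} in lower dimension; instead (see the footnote attached to Theorem~\ref{MB}) it regards Theorem~\ref{TL} as the special case $\mu\approx 1$ of Theorem~\ref{MB}, whose proof is a direct induction on scales in $\R^{n+1}$: one runs the same phase--space localisation/commutator argument as in Section~\ref{SI}, and closes the induction with the $k$-linear discrete Loomis--Whitney inequality \eqref{LWd2} in place of \eqref{LWd}. Your approach is shorter and more modular --- once Theorem~\ref{MBCT} is in hand, only H\"older, Hausdorff--Young and $q\le1$ subadditivity are needed --- but it is tailored to the unrefined statement. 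The paper's induction-on-scales proof, by contrast, is designed so that the extra hypothesis of Condition~\ref{C} can be threaded through the argument to produce the $\mu^{(n+1-k)/2}$ gain of Theorem~\ref{MB}; your slicing argument does not obviously yield that refinement, since the support condition on $f_1$ in the $\xi''$-directions is spent in the Hausdorff--Young step rather than preserved.
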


In dispersive PDE's whenever the iteration involves the use of the so-called bilinear $L^2$ type estimate, the bilinear version of \eqref{LAMRE} occurs somewhere in the argument, though without the $\epsilon$ loss. We can refer the interested reader to \cite{BeHeHoTa,BeHe-Z}, but note that this is such a widely used method, that it is virtually impossible to list all meaningful references. In most cases the bilinear $L^2$ type estimate comes with additional localization properties and this motivates the following refinement of the above result. We assume that $\Sigma_1$ has small support in some directions and ask how this affects the result above.

\begin{con} \label{C}
Assume that $\Sigma_1 \subset B(\calH,\mu)$, where $B(\calH,\mu)$ is the neighborhood of size $\mu$ of the $k$-dimensional affine subspace $\calH$. In addition assume that if $N_{i}, i=k+1,..,n+1$ is a basis of the normal space $\calH^\perp$ to $\calH$, then $N_1(\zeta_1),.., N_k(\zeta_k), N_{k+1},..,N_{n+1}$ are transversal in the sense \eqref{normal} for any choice $\zeta_i \in \Sigma_i$.  
 \end{con}
With this additional assumption on $\Sigma_1$, we obtain the following refinement of Theorem \ref{TL}:
\begin{theo} \label{MB}
Assume $\Sigma_i, i=1,..,k$ satisfy \eqref{smooth} and \eqref{normal2}. In addition, assume that $\Sigma_1$ satisfies Condition \ref{C}.
Then for any $\epsilon > 0$, there is $C(\epsilon)$ such that 
the following holds true
\begin{equation} \label{Lf}
\| \Pi_{i=1}^{k} \calE_i f_i \|_{L^\frac{2}{k-1}(B(0,R))} \leq C(\epsilon) \mu^{\frac{n+1-k}2} R^\epsilon \Pi_{i=1}^{k} \| f_i \|_{L^2(U_i)},
\quad \forall f_i \in L^2(U_i), i=1,..,k.
\end{equation}
\end{theo}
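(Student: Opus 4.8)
The plan is to reduce Theorem \ref{MB} to Theorem \ref{TL} by a wave-packet / parabolic-rescaling argument that exploits the thinness of $\Sigma_1$ in the $n+1-k$ directions normal to $\calH$. First I would set up coordinates so that $\calH = \{x_{k+1}=\dots=x_{n+1}=0\}$ and $\calH^\perp$ is spanned by the last $n+1-k$ coordinate vectors, which by Condition \ref{C} we may take to be $N_{k+1},\dots,N_{n+1}$. Writing $\xi = (\xi',\xi'') \in \R^{k-1}\times\R^{n+1-k}$ for the frequency variable on $U_1$, the hypothesis $\Sigma_1 \subset B(\calH,\mu)$ forces the $\xi''$-extent of $U_1$ to be $O(\mu)$: indeed the tangent plane to $\Sigma_1$ stays within $O(\mu)$ of $\calH$ only if the parametrization varies by $O(\mu)$ in the normal directions, so (after a harmless affine change) $U_1 \subset \{|\xi''| \les \mu\}$. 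The idea is that $\calE_1 f_1$ is then essentially constant at scale $\mu^{-1}$ in the $x''$-directions, so the product $\Pi_{i=1}^k \calE_i f_i$ decomposes over a $\mu^{-1}$-grid in $x''$ into pieces each of which, after rescaling $x'' \mapsto \mu x''$, looks like a $k$-linear configuration on a ball of radius $\mu R \le R$ to which Theorem \ref{TL} applies.

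The key steps, in order: (1) Rescale in the normal directions. Partition $B(0,R)$ into slabs $\{|x'' - c| \les \mu^{-1}\} \times (\text{ball of radius } R \text{ in } x')$; on each slab the phases $x\cdot\Sigma_i(\xi)$ for $i\ge 2$ are genuinely $(n+1)$-dimensional, while for $i=1$ the $x''$-dependence is a lower-order term of size $O(\mu \cdot \mu^{-1}) = O(1)$ that can be frozen up to an $O(1)$ error. (2) On each slab, apply Theorem \ref{TL} in the $x$-variable but now over a region that is a ball of radius $R$ in $x'$ times a ball of radius $\mu^{-1}$ in $x''$ — equivalently, after the anisotropic rescaling $y' = x'$, $y'' = \mu x''$, over a ball of radius $\max(R, \mu^{-1}) \le R$ (assuming WLOG $\mu \ge R^{-1}$, the complementary case being trivial since then $U_1$ is a single wave packet and one estimates $\calE_1 f_1$ in $L^\infty$ directly). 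The rescaling contributes a Jacobian factor $\mu^{-(n+1-k)}$ in the $x''$-integration, which upon taking the $L^{2/(k-1)}$ norm and summing over the $\sim \mu^{n+1-k}(\mu R)^{0}$-many... rather, the number of slabs is $\sim (\mu R)^{n+1-k}$ and each contributes a factor, so carefully bookkeeping the slab-sum via the triangle inequality in $L^{2/(k-1)}$ (noting $2/(k-1)\ge 1$ so we may sum norms, but we want to sum $\ell^2$-orthogonally in the $f_i$ inputs since the slabs localize $\calE_1 f_1$ to disjoint frequency-free... actually spatially disjoint) yields the gain $\mu^{(n+1-k)/2}$. (3) Reassemble: the $x''$-localization to $\mu^{-1}$-slabs is, on the Fourier side, a smoothing at scale $\mu$ in $\xi''$, which is lossless since $U_1$ already has $\xi''$-diameter $\les \mu$; thus no loss is incurred in passing from the global estimate to the per-slab estimates beyond the explicit $\mu$ power.

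I expect the main obstacle to be step (2)–(3): making the orthogonality bookkeeping precise. The subtlety is that $\calE_2 f_2, \dots, \calE_k f_k$ are \emph{not} localized to the $\mu^{-1}$-slabs in $x''$ — only $\calE_1 f_1$ is (approximately). So when one sums over slabs one must exploit that $\sum_{\text{slabs}} \| \calE_1 f_1 \cdot (\text{rest}) \|_{L^{2/(k-1)}(\text{slab})}^{?}$ telescopes correctly. The clean way is: since $2/(k-1) \le 2$, one has $\|\cdot\|_{L^{2/(k-1)}}^{2/(k-1)}$ is subadditive over the slab decomposition, and within each slab one applies Theorem \ref{TL} with inputs $f_1$ restricted (in frequency) to a fixed $\mu$-cap — which is all of $U_1$ — and $f_2,\dots,f_k$ full; the $\mu$-gain per slab is $\mu^{(n+1-k)\cdot(k-1)/2 \cdot (2/(k-1))\cdot\frac12}$... the exponent arithmetic must be done so that, after raising to the power $(k-1)/2$, the total is $\mu^{(n+1-k)/2}$. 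One must also check that freezing the $x''$-dependence of $\Sigma_1$ per slab does not destroy the transversality hypothesis \eqref{normal2} for $\{\Sigma_i\}_{i=1}^k$ needed to invoke Theorem \ref{TL} — but this follows from Condition \ref{C}, which was designed precisely so that the frozen $\Sigma_1$ (whose normal is within $O(\mu)$ of a vector in $\calH$) remains transversal to $\Sigma_2,\dots,\Sigma_k$ with a comparable constant $\nu$. Finally, the trivial regime $\mu \les R^{-1}$ must be dispatched separately as noted, and one should record that the constant $C(\epsilon)$ inherited is exactly that of Theorem \ref{TL} at scale $\mu R$, so no extra $R^\epsilon$ beyond the stated one appears.
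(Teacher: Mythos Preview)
There is a genuine gap in step (2). The anisotropic rescaling $y''=\mu x''$ turns $\calE_i f_i$ for $i\ge 2$ into an extension operator for the map $\xi\mapsto(\Sigma_i'(\xi),\mu^{-1}\Sigma_i''(\xi))$, which has diameter and derivatives of order $\mu^{-1}$ in the last $n+1-k$ components; this destroys both the boundedness of the parametrizing domain and the smoothness hypothesis \eqref{smooth} required by Theorem~\ref{TL}, and the constant $C(\epsilon)$ there is not uniform under such dilations. If instead you forgo the rescaling and apply Theorem~\ref{TL} directly on each slab (which sits inside $B(0,R)$), a single slab yields no $\mu$-gain; summing the $\sim(\mu R)^{n+1-k}$ slabs by $p$-subadditivity (note $2/(k-1)\le 1$ for $k\ge 3$, not $\ge 1$ as you wrote at one point) then produces a \emph{loss} of order $(\mu R)^{(n+1-k)(k-1)/2}$. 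The orthogonality you invoke in step (3) is not available: the physical-side slabs localize $\calE_1 f_1$ but not $\calE_2 f_2,\dots,\calE_k f_k$, each of which carries the full input $f_i$ on every slab, so there is nothing to sum in $\ell^2$. Trying to repair this by also splitting $U_2,\dots,U_k$ into $\mu$-caps in the $\xi''$-direction reintroduces the same counting problem and amounts to rebuilding an induction on scales from scratch.

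For comparison, the paper does \emph{not} reduce Theorem~\ref{MB} to Theorem~\ref{TL}. It reruns the induction on scales of Section~\ref{SI} with the $\mu$-support hypothesis on $f_1$ built into the definition of $A(R)$. The key modification is that the phase-space decomposition of $\mathcal F_1^{-1}f_1$ is carried out only in the $(k-1)$-dimensional subspace $\calH\cap\calH_1$ (via strips $\mathfrak{s}$ rather than full parallelepipeds), so the $\mu$-support of $f_1$ in the $\calH^\perp$-directions is preserved \emph{exactly} at every step of the induction; the passage from scale $R$ to $\delta^{-1}R$ then uses the refined discrete Loomis--Whitney inequality of Lemma~\ref{Le2} in place of \eqref{LWd}. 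The factor $\mu^{(n+1-k)/2}$ enters only at the base of the induction, through the trivial bound $\|\calE_1 f_1\|_{L^\infty}\les \mu^{(n+1-k)/2}\|f_1\|_{L^2}$.
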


The bilinear versions (i.e. $k=2$) of Theorems \ref{TL} and \ref{MB} can be obtained without the $\epsilon$ loss, see  \cite{BeHeHoTa,BeHe-Z} for instance. What is special about the bilinear 
versions of \eqref{LAMRE} and  \eqref{Lf} is that it involves an $L^2$ type estimate, therefore it is equivalent to estimating 
a convolution of type $g_1 d\sigma_1 \ast g_2 d\sigma_2$ in $L^2$, where $d \sigma_1, d\sigma_2$ are measures supported on the hypersurfaces $\Sigma_1(U), \Sigma_2(U)$ respectively. Then obtaining \eqref{LAMRE} and the refined version in Theorem \ref{MB} is an easier task; moreover this gives directly the results without the $\epsilon$ loss. However, this approach relies on the use of Plancherel's theorem and, when $k \geq 3$, $L^\frac{2}{k-1} \ne L^2$. It is precisely this aspect that makes the multilinear estimate with $k \geq 3$ much harder than the bilinear estimate.  

The main goal of this paper is to provide a new proof of the multilinear restriction estimate in Theorem \ref{MBCT} and unveil the refined result at lower levels of multilinearity in Theorem \ref{MB}\footnote{While we do not prove Theorem \ref{TL} directly, its proof is a simplified version of the one we provide for Theorem \ref{MB}.}. The current arguments for Theorem \ref{MBCT} in \cite{BeCaTa} and \cite{Gu-easy} establish its Kakeya analogue and then appeal to a standard machinery described in  \cite{BeCaTa} to obtain Theorem \ref{MBCT}. The argument in \cite{BeCaTa} for proving the Kakeya analogue of Theorem \ref{MBCT} uses a continuous version of the standard induction on scale and it is rather involving. In a recent paper \cite{Gu-easy}, Guth provides an easier and more concise argument for the multilinear Kakeya version of \eqref{AMRE}. While the proof in \cite{Gu-easy} is short and elegant, one still needs to go back to \cite{BeCaTa} for an argument on how the near-optimal multilinear Kakeya estimate implies the near-optimal multilinear restriction estimate \eqref{AMRE}.

Inspired by the work in \cite{Gu-easy}, we are providing another short argument for Theorem \ref{MBCT}. The proof is provided directly for \eqref{AMRE}, not its Kakeya version, therefore there is no need to appeal to additional results. In this sense the proof is self-contained and this is one of the reasons to provide this new proof. The other reason for this new approach is that the method we develop provides an easy way to obtain the refined result in Theorem \ref{MB}.

Although inspired by the work in \cite{Gu-easy}, our geometric setup is closer in spirit to arguments used in previous work of the author, Herr and Tataru in \cite{BeHeTa}, which were later used by Bennett and Bez in \cite{BeBe}. In \cite{BeHeTa} a weaker version of \eqref{MRE} for $n=2$ was established: instead of the $L^1$ estimate for $\calE_1 f_1 \cdot \calE_2 f_2 \cdot \calE_3 f_3$, the $L^\infty$ estimate for its Fourier transform, $\widehat{\calE_1 f_1}  \ast \widehat{\calE_2 f_2} \ast \widehat{\calE_3 f_3}$, is provided.

There are two main ideas in our arguments: the use of a phase-space approach (localizations both on the physical and frequency side) 
to sort the geometry at the larger scale and the use of the discrete Loomis-Whitney inequality to pass from smaller scales to larger scales in the induction process.

Before ending the introduction, we highlight the following nonlinear character of the multilinear restriction estimate. A variant of the classical Loomis-Whitney inequality is the the following estimate
\begin{equation}  \label{LW}
\| \Pi_{i=1}^{n+1} \widehat{f_i d \calH_i} \|_{L^{\frac2{n}}(\R^{n+1})} \les \Pi_{i=1}^{n+1} \| f_i \|_{L^2(\calH_i)}, 
\end{equation}
where $\calH_i, i=1,..,n+1$ are transversal hyperplanes and $d \calH_i$ is the standard $n$-dimensional Lebesgue measure supported on $\calH_i$. By transversality we mean that if $N_i$ are (constant) unit normals to $\calH_i, i=1,..,n+1$,
then they satisfy \eqref{normal}. The proof \eqref{LW} is elementary. 
The multilinear restriction estimate is a non-linear generalization of the Loomis-Whitney inequality in the following sense: 
the hyperplanes $\calH_i$ are replaced by more general hypersurfaces $\Sigma_i$.  While the proof if \eqref{LW} is elementary, once the surfaces are allowed to have some curvature, things become far more complicated. 

\subsection*{Acknowledgement}
Part of this work was supported by a grant from the Simons Foundation ($\# 359929$, Ioan Bejenaru).
Part of this work was supported by the National Science Foundation under grant No. DMS-$1440140$ while the author was in residence at the 
Mathematical Research Sciences Institute in Berkeley, California, during the Fall 2015 semester. 

\section{Notation and discrete Loomis-Whitney inequalities} \label{CT}
 
\subsection{Notation} \label{NOT}

We use the standard notation $A \ls B$, meaning $A \leq C B$ for some universal $C$ which is independent of variables used in this paper, particularly it will be independent of $\delta$ and $R$ that appear in the main proof. By $A \ls_N B$ we mean $A \leq C(N) B$ and indicate that $C$ depends on $N$.  
 
We will work with $L^p(S), S \subset \R^n$ and, for that reason, we recall the standard estimate for superpositions of functions in $L^p$ for $0 < p \leq 1$:
\begin{equation} \label{triangle}
\| \sum_\alpha f_\alpha \|_{L^p}^p \leq \sum_\alpha \| f_\alpha \|_{L^p}^p. 
\end{equation}
 
We continue with the setup specific to our problem. Assume $\calH_1 \subset \R^{n+1}$ is a hyperplane (in the $\xi$ space) passing through the origin with normal $N_1$.
To keep notation compact, we will also denote by $\calH_1 \subset \R^{n+1}$ the hyperplane in the $x$ space passing through the origin with normal $N_1$. 
 We denote by $\mathcal{F}_1: \calH_1 \rightarrow \calH_1$ the standard Fourier transform, $x \rightarrow \xi$, and by $\mathcal{F}_1^{-1}$ the inverse Fourier transform, 
 $\xi \rightarrow x$. We denote the variables in $\R^{n+1}$ by $x=(x_1,x')$ respectively $\xi=(\xi_1,\xi')$, where $x_1,\xi_1$ are the coordinates along $N_1$ and $x',\xi'$ 
 are the coordinates along $\calH_1$. Obviously, $\mathcal{F}_1, \mathcal{F}_1^{-1}$ act on the variables $x',\xi'$ respectively.  We let $\pi_{N_1}: \R^{n+1} \rightarrow \calH_1$ the associated projection (in the $x$ space) along the normal $N_1$.
 
Assume $U_1  \subset \calH_1$ is open and  bounded. For $f: U_1 \rightarrow \C$, $f \in L^2(U_1)$ we define the operator 
$\calE_1: L^2(U_1) \rightarrow L^\infty(\R^{n+1})$ by 
\begin{equation} \label{E1}
\calE_1 f (x)= \int_{U_1} e^{i (x' \xi' + x_1 \varphi_1(\xi'))} f(\xi') d\xi'.
\end{equation}
 We highlight a commutator estimate which is needed due to the uncertainty principle. 
 It has a PDE flavor in it, but it can be stated in more classical fashion by studying the operator in \eqref{E1} from the perspective of oscillatory integrals.
  We define the differential operator $\nabla \varphi_1(\frac{D'}i)$ to be the operator with symbol $\nabla \varphi_1(\xi')$. 
 For any fixed $x_0 \in \R^{n+1}$, it holds true that
\begin{equation} \label{com}
(x'-x'_0-x_1 \nabla \varphi_1(\frac{D'}i))^N \calE_1 f =  \calE_1 (\mathcal{F}_1( (x'-x'_0)^N  \mathcal{F}_1^{-1} f)), \quad \forall N \in \N. 
\end{equation}
This is a direct computation using \eqref{E1} and it suffices to check it for $N=1$. 
The role of \eqref{com} will be to quantify localization properties of $\mathcal{F}_1^{-1} f$ on the hyperplanes $x_1=constant$.
Morally, \eqref{com} implies the following: if $\mathcal{F}_1^{-1} f$ (corresponding to $x_1=0$ in $\calE_1 f$) is concentrated in the set $|x'-x'_0| \les A$, then for fixed $x_1$, $\calE_1 f$ is concentrated in the set $|x'-x'_0-x_1 \nabla \varphi_1(\xi')| \les A$ where $\xi'$ covers the support of $f$. 
  
Next we prepare some geometric elements that are needed in the proof. Given $N_i, i=1,..,n+1$ transversal unit vectors in $\R^{n+1}$,
let $\calH_i \subset \R^{n+1}$ be the hyperplanes passing through the origin to which $N_i$ are normals. For each $i=1,..,n+1$, we define
$\mathcal{F}_i : \calH_i \rightarrow \calH_i$ the Fourier transform on $\calH_i$ and $\pi_{N_i}: \R^{n+1} \rightarrow \calH_i$ the projection onto $\calH_i$ as above. The vectors $N_i, i=1,..,n+1$ form a basis and 
the coordinates of a point $x \in \R^{n+1}$ are taken with respect to this basis. We construct $\mathcal{L}:=\{ z_1 N_1 + ... +z_{n+1} N_{n+1}:(z_1,..,z_{n+1}) \in \Z^{n+1} \}$ to be the oblique lattice in $\R^{n+1}$ generated by the unit vectors $N_1,..,N_{n+1}$. In each $\calH_i$ we construct the induced lattice $\mathcal{L}(\calH_i)=\pi_{N_i} (\mathcal{L})$; this is a lattice
since the projection is taken along a direction of the original lattice $\mathcal{L}$.

Given $r > 0$ we define $\calC(r)$ be the set of of parallelepipeds of size $r$ in $\R^{n+1}$ relative to the lattice $\mathcal{L}$; a parallelepiped in $\calC(r)$ has the following form $q(\bj): = [r (j_1-\frac12), r(j_1+\frac12)] \times .. \times [r (j_{n+1}-\frac12), r(j_{n+1}+\frac12)]$
where $\bj=(j_1,..,j_{n+1}) \in \Z^{n+1}$. For such a parallelepiped we define $c(q)=r \bj=(r j_1,.., r j_{n+1}) \in r\mathcal{L}$ to be its center.
Then, for each $i=1,..,n+1$, we let $\calC\calH_i(r) =\pi_{N_i} \calC(r)$ be the set of parallelepipeds of size $r$ in the hyperplane $\calH_i$.
Finally, given two parallelepipeds $q,q' \in \calC(r)$ or $\calC \calH_i(r)$ we define
$d(q,q')$ to be the distance between them when considered as subsets of the underlying space, let it be $\R^{n+1}$ or $\calH_i$.  

Let $\chi_0^n:\R^n \rightarrow [0,+\infty)$ be a Schwartz function, normalized in $L^1$, that is $\| \chi_0^n \|_{L^1}=1$,
and with Fourier transform supported on the unit ball. We fix $i \in\{ 1,..,n+1 \}$, $r> 0$ and define  $\mathcal{T}_i : \calH_i \rightarrow \calH_i$ to be the linear operator that takes $\mathcal{L}(\calH_i)$ to the standard lattice $\Z^{n}$ in $\calH_i$. Then for each $q \in \calC \calH_i(r)$, define
$\chi_q: \calH_i \rightarrow \R$ by
\[
\chi_{q}(x) = \chi_0^n (\mathcal{T}_i(\frac{x-c(q)}r))
\]
Notice that $\mathcal{F}_i \chi_{q}$ has Fourier support in the ball of radius $\les r^{-1}$. By the Poisson summation formula and properties of $\chi_0^n$, 
\begin{equation} \label{pois}
\sum_{q \in \calC \calH_i(r)} \chi_{q}=1.
\end{equation}
Using the properties of $\chi_q$, a direct exercise shows that for each $N \in \N$, the following holds true
\begin{equation} \label{SN}
\sum_{q \in \calC \calH_i(r)}  \| \la \frac{x-c(q)}r \ra^{N} \chi_{q} g \|_{L^2}^2 \les_N \| g \|_{L^2}^2
\end{equation}
for any $g \in L^2(\calH_i)$. Here, the variable $x$ is the argument of $g$ and belongs to $\calH_i$.

\subsection{Discrete versions of the Loomis-Whitney inequality} \label{DLW}
We end this section with two simple discrete versions of the continuous Loomis-Whitney inequality. The first one is the discrete version of \eqref{LW}; in the language introduced earlier, the following holds true
\begin{equation} \label{LWd}
\| \Pi_{i=1}^{n+1} g_i(\pi_{N_i}(z)) \|_{l^{\frac2{n}}(\mathcal{L})} \ls \Pi_{i=1}^{n+1} \| g_i \|_{l^2(\mathcal{L}(\calH_i))}. 
\end{equation}
where we assume that $N_i,i=1,..,n+1$ are transversal in the sense \eqref{normal}.

Next we provide a refinement of \eqref{LWd}. Given $k \in \N$ with $2 \leq k \leq n$, let $\calH_i \subset \R^{n+1}, i=1,..,k$ be $n$-dimensional hyperplanes passing through the origin and $N_i$ are their corresponding normals. We let $\calH \subset \calH_1$ be a subspace of dimension $k-1$ and let $N_{k+1},..,N_{n+1}$ be such that
$N_1, N_{k+1},..,N_{n+1}$ is an orthonormal basis to $\calH^\perp$, the normal space to $\calH$. We assume that $N_i, i=1,..,n+1$ are transversal in the sense \ref{normal} and note that this is invariant with respect to the choice of vectors $N_{k+1},..,N_{n+1}$. For $i=k+1,..,n+1$ we let $\calH_i$ be the hyperplanes passing through the origin with normal $N_i$.

Then as before we let $\pi_{N_i}, i=i,..,n+1$ be the corresponding projectors onto $\calH_i$. We define $\pi=\pi_{N_1} \circ \pi_{N_{k+1}} \circ .. \circ \pi_{N_{n+1}}$ to be the projector onto $\calH$. Then we let $\mathcal{L}$ be the lattice in $\R^{n+1}$ generated by $N_i$ and denote by $\mathcal{L}(\calH_i) = \pi_{N_i}(\mathcal{L}), i=2,..,k$ the induced lattice in $\calH_i$, while $\mathcal{L}(\calH) = \pi(\mathcal{L})$, the induced lattice in $\calH$. With this notation in place we have the following result:

\begin{lema} \label{Le2}
Assume $g_1 \in l^2(\mathcal{L}(\calH))$ and $g_i \in l^2(\mathcal{L}(\calH_i)), i=2,..,k$. Then the following holds true
\begin{equation} \label{LWd2}
\| g_1(\pi(z)) \Pi_{i=2}^{k} g_i(\pi_{N_i}(z)) \|_{l^{\frac2{k-1}}(\mathcal{L})} \ls \|g_1\|_{l^2(\mathcal{L}(\calH))}  \Pi_{i=2}^{k} \| g_i \|_{l^2(\mathcal{L}(\calH_i))}. 
\end{equation}
\end{lema}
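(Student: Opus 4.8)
The plan is to reduce Lemma \ref{Le2} to the discrete Loomis--Whitney inequality \eqref{LWd} applied inside the $k$-dimensional ambient space $\calH_1$, rather than in $\R^{n+1}$. The key observation is that the factor $g_1(\pi(z))$ depends only on the projection of $z$ onto the $(k-1)$-dimensional subspace $\calH \subset \calH_1$, while each $g_i(\pi_{N_i}(z))$, $i=2,\dots,k$, depends on $z$ only through $\pi_{N_i}(z)$. First I would slice $\R^{n+1}$ along the $(n+1-k)$-dimensional direction spanned by $N_{k+1},\dots,N_{n+1}$: write $\mathcal{L} \cong \mathcal{L}_\sharp \oplus \mathcal{L}_\perp$ where $\mathcal{L}_\sharp$ is the rank-$k$ sublattice generated by $N_1,\dots,N_k$ and $\mathcal{L}_\perp$ the rank-$(n+1-k)$ sublattice generated by $N_{k+1},\dots,N_{n+1}$. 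Along each slice $\mathcal{L}_\sharp + w$, $w \in \mathcal{L}_\perp$, the functions $g_i(\pi_{N_i}(\cdot))$, $i=2,\dots,k$, are genuinely functions on the $k$-dimensional lattice $\mathcal{L}_\sharp$ (more precisely, functions of the $k$ oblique coordinates $z_1,\dots,z_k$), and the product structure on that slice is exactly of Loomis--Whitney type: $g_1$ is constant in the $z_1$ direction (it sees only $\calH$, which is $\{z_1 = \text{const}\}$ within $\calH_1$ after the right identification), and $g_i$ is constant in the $z_i$ direction.

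The second step is the core of the argument. Inside the $k$-dimensional space $\calH_1$ with coordinates $(z_1,\dots,z_k)$ relative to the basis $N_1,\dots,N_k$ (these are transversal by hypothesis), apply the $k$-fold discrete Loomis--Whitney inequality: with $\tilde g_1$ a function on the hyperplane $\{z_1=0\}$ of $\calH_1$ and $\tilde g_i$ functions on the hyperplanes $\{z_i = 0\}$, one has
\[
\| \tilde g_1(z_2,\dots,z_k) \, \Pi_{i=2}^{k} \tilde g_i(z_1,\dots,\widehat{z_i},\dots,z_k) \|_{l^{\frac{2}{k-1}}} \ls \|\tilde g_1\|_{l^2} \Pi_{i=2}^{k} \|\tilde g_i\|_{l^2}.
\]
This is precisely \eqref{LWd} with $n+1$ replaced by $k$ (the transversality \eqref{normal} for $N_1,\dots,N_k$ inside the $k$-dimensional span is what \eqref{normal2} gives us). Since $g_1(\pi(z))$ really only depends on $(z_2,\dots,z_k)$ — here I need the compatibility that $\pi$ projects onto $\calH$ and $\calH$ is the coordinate hyperplane $\{z_1 = 0\}$ of $\calH_1$, which follows because $\pi = \pi_{N_1}\circ\pi_{N_{k+1}}\circ\cdots\circ\pi_{N_{n+1}}$ and $\calH^\perp = \mathrm{span}(N_1,N_{k+1},\dots,N_{n+1})$ — and each $g_i(\pi_{N_i}(z))$ depends only on the $z$-coordinates other than $z_i$, the slice estimate is exactly this inequality with $\tilde g_1 = g_1$, $\tilde g_i = g_i$.

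Finally I would reassemble the slices. Since the integrand is completely independent of the $\mathcal{L}_\perp$-coordinates $z_{k+1},\dots,z_{n+1}$ — none of the factors sees them — the $l^{2/(k-1)}$ norm over all of $\mathcal{L}$ is simply the $l^{2/(k-1)}$ norm over $\mathcal{L}_\sharp$ times a divergent count over $\mathcal{L}_\perp$. This forces me to be slightly more careful: the statement as written only makes sense if one interprets the indices $z$ as ranging over $\mathcal{L}_\sharp$, i.e. the true variables are the $k$ coordinates, or the functions $g_i$ implicitly carry the decay built into the $\chi_q$ cutoffs of the main proof so that summation over $\mathcal{L}_\perp$ converges. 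I expect the main obstacle to be exactly this bookkeeping: identifying the induced lattices $\mathcal{L}(\calH_i) = \pi_{N_i}(\mathcal{L})$ for $i=2,\dots,k$ and $\mathcal{L}(\calH) = \pi(\mathcal{L})$ correctly with the coordinate slices, checking that the projections $\pi_{N_i}$ restricted to the slice agree with the coordinate projections in $\calH_1$ up to the linear isomorphism $\mathcal{T}$ that straightens the oblique lattice, and confirming that the transversality constant for $N_1,\dots,N_k$ inside $\calH_1 = \mathrm{span}(N_1,\dots,N_k)$ is controlled by $\nu$ from \eqref{normal2} (this is immediate since $\mathrm{vol}(N_1,\dots,N_k) \geq \nu$ is precisely the needed lower bound on the determinant in the $k$-dimensional space). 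Once these identifications are in place, the inequality \eqref{LWd2} is \eqref{LWd} in dimension $k$, transported by the lattice isomorphism, and the constant depends only on $k$ and $\nu$.
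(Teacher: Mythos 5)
Your slicing of $\mathcal{L}$ into $\mathcal{L}_\sharp\oplus\mathcal{L}_\perp$ and the application of the $k$-dimensional case of \eqref{LWd} on each slice $\{z''=\mathrm{const}\}$ is exactly the paper's first step, and your identification of $\calH$ with the coordinate hyperplane $\{z_1=0\}$ of $\calH_1$ is correct. The problem is your final reassembly step, which contains a genuine error: you assert that ``none of the factors sees'' the transverse coordinates $z_{k+1},\dots,z_{n+1}$, so that the sum over $\mathcal{L}_\perp$ is a divergent count and the statement must be reinterpreted. That is false. For $i=2,\dots,k$ the hyperplane $\calH_i$ is $n$-dimensional and $\pi_{N_i}$ kills only the $z_i$-coordinate, so $g_i(\pi_{N_i}(z))$ depends on \emph{all} of $z_1,\dots,\widehat{z_i},\dots,z_{n+1}$, in particular on $z''$; the only factor that is constant in $z''$ is $g_1(\pi(z))$, since $\pi$ also kills the $N_{k+1},\dots,N_{n+1}$ directions. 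The lemma is well posed as stated, with no hidden cutoffs needed.

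The missing step is the following. After applying \eqref{LWd} in dimension $k$ on the slice with $z''$ fixed, the right-hand side is $\|g_1\|_{l^2(\mathcal{L}(\calH))}\cdot\prod_{i=2}^{k}h_i(z'')$, where $h_i(z''):=\|g_i(\pi_{N_i}(\cdot,z''))\|_{l^2_{z'}}$. Each $h_i$ is a genuine function of $z''$ with $\|h_i\|_{l^2_{z''}}=\|g_i\|_{l^2(\mathcal{L}(\calH_i))}$, because summing $|g_i|^2$ over the fibers of $\pi_{N_i}$ in $z'$ and then over $z''$ recovers the full $l^2$ norm on the rank-$n$ lattice $\mathcal{L}(\calH_i)$. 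Since the $l^{2/(k-1)}(\mathcal{L})$ norm is the $l^{2/(k-1)}_{z''}$ norm of the slicewise $l^{2/(k-1)}_{z'}$ norms, it remains to bound $\|\prod_{i=2}^{k}h_i\|_{l^{2/(k-1)}_{z''}}\les\prod_{i=2}^{k}\|h_i\|_{l^2_{z''}}$, which is H\"older's inequality for $k-1$ functions each in $l^2$. This convergence of the $z''$-sum via H\"older is precisely where the exponent $\frac{2}{k-1}$ is used a second time, and it is the step your argument replaces with an incorrect claim of $z''$-independence.
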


\begin{proof} For $z \in \mathcal{L}$ we write $z=(z',z'')$ where $z'=(z_1,..,z_{k})$ collects the coordinates in the directions of $N_1,..,N_k$ and $z''$ collects the coordinates in the directions of $N_{k+1},..,N_{n+1}$. We fix $z''$, let $\mathcal{L}' \times \{z'' \}$ be the  sub-lattice of $\mathcal{L}$ obtained by fixing $z''$  and apply \eqref{LWd} to obtain
\[
\| g_1(\pi(\cdot,z'')) \Pi_{i=1}^{k} g_i(\pi_{N_i}(\cdot, z'')) \|_{l^{\frac2{k-1}}(\mathcal{L}' \times \{z''\})} \ls \|g_1(\pi_{N_1}(\cdot))\|_{l^2}  \Pi_{i=2}^{k} \| g_i(\pi_i(\cdot,z'')) \|_{l^2}.
\]
The first terms is motivated by the fact that $g_1(\pi_{N_1}(\cdot,z''))=g_1(\pi(\cdot))$. Then notice that, on the right-hand side above, inside the product $\Pi_{i=2}^{k}$, we have $k-1$ functions in $l^2$ with respect to the variable $z''$, thus leading to the desired $l^\frac{2}{k-1}$
estimate with respect to that variable and for the product. 

\end{proof}

\section{The induction argument for Theorem \ref{MBCT}} \label{SI}

Given some $0 < \delta \ll 1$ we split each domain $U_i$ into smaller pieces of diameter $\leq \delta$.
This, in turn, splits the surfaces $\Sigma_i(U_i)$ in the corresponding pieces. 
 It suffices to prove the multilinear estimate for each $\Sigma_i(U_i)$ being replaced
by one of its pieces, since then we can sum up the estimates for all possibles combinations of pieces using \eqref{triangle} and generate the original estimate at a cost of picking a factor of 
$\approx \left( (\delta^{-n})^{n+1} \right)^\frac{k-1}2=\delta^\frac{-kn(n+1)}2$. In the end of the argument, $\delta$ will be chosen in terms of absolute constants and $\epsilon$, but not $R$, and the factor $\delta^\frac{-kn(n+1)}2$  will be absorbed into $C(\epsilon)$. 

Now suppose that each $\Sigma_i(U_i)$ is as above, that is the diameter of $U_i$ is $ \leq \delta$. We choose and fix some $\zeta_i^0 \in \Sigma_i$, let $N_i=N_i(\zeta_i^0)$ be the 
normal to $\Sigma_i$ and let $\calH_i$ be the transversal hyperplane passing through the origin with normal $N_i(\zeta_i^0)$. Using a smooth change of coordinates, we can assume that $U_i \subset B_i(0,\delta) \subset \calH_i$ (where $B_i(0;\delta)$ is the ball in the hyperplane $\calH_i$ centered at the origin and of diameter $\delta$) and that 
\begin{equation} \label{E}
\calE_i f_i = \int_{U_i} e^{i (x' \xi' + x_i \varphi_i(\xi'))} f_i(\xi') d\xi',
\end{equation}
where $x=(x_i,x')$, $x_i$ is the coordinate in the direction of $N_i$ and $x'$ are the coordinates in the directions from $\calH_i$. 
Since the diameter of $U_i$ is $\les \delta$, 
it follows that $|\nabla \varphi_i(x)-\nabla \varphi_i(y)| \les \delta$ for any $x,y \in U_i$.  The rest of the argument will be provided for this setup. 

Using the normals $N_i$ we construct all entities described in Section \ref{NOT}. 

The proof of \eqref{AMRE} relies on estimating $\Pi_{i=1}^{n+1} \calE_i f_i$ on parallelepipeds on the physical side and analyze how the estimate
behaves as the size of the cube goes to infinity by using an inductive type argument with respect to the size of the parallelepiped.
As we move from one spatial scale to a larger one, we will have to tolerate slightly larger Fourier support in the argument. But this accumulation is in the form of a convergent geometric series, therefore the only harm it does is imposing an additional technical layer in the argument. This comes in the form of the margin concept previously used in the bilinear restriction theory, see  \cite{Tao-BW,Wo, Be1}.
For a function $f: \calH_i \rightarrow \C$ we define the margin 
\begin{equation} \label{defmrg}
\mbox{margin}^i(f) := \mbox{dist}(\mbox{supp} ( f), B_i(0;2\delta)^c), \quad i=1,..,n+1, 
\end{equation}
where $\mbox{supp} $ is the support of $f$.

\begin{defin} Given $R \geq \delta^{-2}$ we define $A(R)$ to be the best constant for which the estimate
\begin{equation}
\| \Pi_{i=1}^{n+1} \calE_i f_i \|_{L^{\frac2n}(Q)} \leq A(R) \Pi_{i=1}^{n+1} \| f_i \|_{L^2} 
\end{equation}
holds true for all parallelepipeds $Q \in \calC(R)$, with $f_i$
obeying the margin requirement
\begin{equation} \label{mrpg}
margin^i(f_i) \geq \delta-R^{-\frac12}.
\end{equation}
\end{defin}
The induction starts from $R \geq \delta^{-2}$ in order to be able to propagate the margin requirements.  

We provide an estimate inside any cube $Q \in \calC(\delta^{-1}R)$ based on prior information
on estimates inside cubes $q \in \calC(R) \cap Q$. Without restricting the generality of the argument, we assume
that $Q$ is centered at the origin and recall that each $q \in \calC(R) \cap Q$ has its center in
$R \mathcal{L}$. When such a  $q$ is projected using $\pi_{N_i}$ onto $\calH_i$ one obtains $\pi_{N_i} q \in \calC \calH_i(R)$.

Each $q \in \calC(R) \cap Q$ has size $R$ and the induction hypothesis is the following:
\begin{equation} \label{IO}
\| \Pi_{i=1}^{n+1} \calE_i f_i \|_{L^{\frac2{n}}(q)} \leq A(R) \Pi_{i=1}^{n+1} \| f_i \|_{L^2}. 
\end{equation}
We strengthen this to
\begin{equation} \label{INS}
\| \Pi_{i=1}^{n+1} \calE_i f_i \|_{L^{\frac2{n}}(q)} 
\les  A(R) \Pi_{i=1}^{n+1} \left( \sum_{q' \in \calC \calH_i(R)} \la \frac{d(\pi_{N_i} q,q')}R \ra^{-(2N-n^2)} \| \la \frac{x-c(q')}R \ra^{N} \chi_{q'} \mathcal{F}_i^{-1} f_i \|_{L^2}^2 \right)^\frac12
\end{equation}

The basic idea in \eqref{INS} is the following: if $q' \ne \pi_{N_i} q$, then 
$\calE_1 \mathcal{F}_1 ( \chi_{q'}  \mathcal{F}_1^{-1} f_1)$ has off-diagonal type contribution outside $q' \times [-\delta^{-1} R, \delta^{-1} R]$ (the interval stands for the $i$'th slot), thus it has off-diagonal
type contribution to the left-hand side of \eqref{INS}. This is achieved as follows: fix $i=1$ and $q' \in \calC \calH_1(R)$. With $x=(x_1,x')$ we have
\[
\begin{split}
& \| (x'- c(q')-x_1 \nabla \varphi_1(\xi'_0)) \calE_1 \mathcal{F}_1 ( \chi_{q'}  \mathcal{F}_1^{-1} f_1) \cdot \Pi_{i=2}^{n+1}  \calE_i f_i \|_{L^{\frac2{n}}(q)} \\
= & \| (x'- c(q')-x_1 \nabla \varphi_1(\xi')) \calE_1 \mathcal{F}_1 ( \chi_{q'} \mathcal{F}_1^{-1} f_1) \cdot \Pi_{i=2}^{n+1}  \calE_i f_i  \|_{L^{\frac2{n}}(q)} \\
+ & \| x_1 ( \nabla \varphi_1(\xi'_0) - \nabla \varphi_1(\xi')) \calE_1 \mathcal{F}_1 ( \chi_{q'} \mathcal{F}_1^{-1} f_1) \cdot \Pi_{i=2}^{n+1}  \calE_i f_i  \|_{L^{\frac2{n}}(q)} \\
= & \|  \calE_1 \mathcal{F}_1 ( (x'-c(q'))  \chi_{q'} \mathcal{F}_1^{-1} f_1) \cdot \Pi_{i=2}^{n+1}  \calE_i f_i  \|_{L^{\frac2{n}}(q)} \\
+ & \| x_1  \calE_1 \mathcal{F}_1 ( ( \nabla \varphi_1(\xi'_0) - \nabla \varphi_1(\xi')) \chi_{q'} \mathcal{F}_1^{-1} f_1) \cdot \Pi_{i=2}^{n+1}  \calE_i f_i  \|_{L^{\frac2{n}}(q)} \\
\leq & A(R) \left(  \| (x'-c(q')) \chi_{q'} \mathcal{F}_1^{-1} f_1 \|_{L^2} + \delta^{-1} R \| (\nabla \varphi_1(\xi'_0) - \nabla \varphi_1(\xi')) \chi_{q'} \mathcal{F}_1^{-1} f_1 \|_{L^2} \right) \Pi_{i=2}^{n+1} \| f_i \|_{L^2}  \\
\les & A(R) \left(  \| (x'-c(q')) \chi_{q'} \mathcal{F}_1^{-1} f_1 \|_{L^2} + R \| \chi_{q'} \mathcal{F}_1^{-1} f_1 \|_{L^2} \right)  \Pi_{i=2}^{n+1} \| f_i \|_{L^2} \\
\les & R A(R) \| \la \frac{x'-c(q')}{R} \ra \chi_{q'} \mathcal{F}_1^{-1} f_1 \|_{L^2}  \Pi_{i=2}^{n+1} \| f_i \|_{L^2}
\end{split}
\]
We have used the following: \eqref{com} in justifying the equality between the terms on the second and fourth line, the induction hypothesis and the fact that inside $Q$ we have $|x_1| \les \delta^{-1}R$ to justify the inequality in the sixth line. Note that it is in the above use of the induction estimate for $\calE_1 \mathcal{F}_1 ( (x'-c(q'))  \chi_{q'} \mathcal{F}_1^{-1} f_1)$ that we need to tolerate
the relaxed support of $f_1$. The margin of $f_1$ is $\geq \delta-(\delta^{-1} R)^{-\frac12}=\delta-\delta^\frac12 R^{-\frac12}$ is affected by the convolution
$\mathcal{F}_1 ((x'-c(q'))  \chi_{q'})$ by a factor of at most $C R^{-1}$ which is smaller than $\frac12 \delta^\frac12 R^{-\frac12}$, provided that $\delta$ is small relative to $C^{-1}$. Hence the new margin is $\geq \delta-\frac12 \delta^\frac12 R^{-\frac12} \geq \delta - R^{-\frac12}$, this being the required margin for using the induction hypothesis on cubes of size $R$. 

For any $q \in \calC(R) \cap Q$ and $x' \in \pi_{N_1}(q)$, it holds that 
$\la \frac{x'- c(q')-x_1 \nabla \varphi_1(\xi'_0)}R \ra \approx \la \frac{d(\pi_{N_1}(q),q')}R \ra$. 
This is justified by the fact that $|x_1| \les \delta^{-1}R$ and $|\nabla \varphi_1(\xi'_0)| \leq \delta$, 
therefore the contribution of $|x_1 \nabla \varphi_1(\xi'_0)| \leq R$ is negligible. From this and the previous set of estimates,
we conclude that
\[
 \| \calE_1 \mathcal{F}_1 ( \chi_{q'} \mathcal{F}_1^{-1} f_1) \cdot \Pi_{i=2}^{n+1}  \calE_i f_i \|_{L^{\frac2{n}}(q)} \les 
  \la  \frac{d(\pi_{N_1}q,q')}{R} \ra^{-1} \| \la \frac{x'-c(q')}{R} \ra \chi_{q'} \mathcal{F}_1^{-1} f_1 \|_{L^2}  \Pi_{i=2}^{n+1} \| f_i \|_{L^2}
\]
Repeating the argument gives
\[
 \| \calE_1 \mathcal{F}_1 ( \chi_{q'} \mathcal{F}_1^{-1} f_1) \cdot \Pi_{i=2}^{n+1}  \calE_i f_i \|_{L^{\frac2{n}}(q)} \les_N
  \la  \frac{d(\pi_{N_1}q,q')}{R} \ra^{-N} \| \la \frac{x'-c(q')}{R} \ra^N \chi_{q'} \mathcal{F}_1^{-1} f_1 \|_{L^2}  \Pi_{i=2}^{n+1} \| f_i \|_{L^2}
\]

Using \eqref{pois}, \eqref{triangle} and the above, we obtain
\[
\begin{split}
& \| \calE_1 f_1 \cdot \Pi_{i=2}^{n+1}  \calE_i f_i \|_{L^{\frac2{n}}(q)}^\frac2n \\
 \leq  &
\sum_{q' \in \calC \calH_1(R)} \| \calE_1 \mathcal{F}_1 ( \chi_{q'} \mathcal{F}_1^{-1} f_1) \cdot \Pi_{i=2}^{n+1}  \calE_i f_i \|_{L^{\frac2{n}}(q)}^\frac2n \\
 \les_N & \left( \sum_{q' \in \calC \calH_{1}(R)} \la  \frac{d(\pi_{N_1}q,q')}{R} \ra^{-N \cdot \frac2n} \| \la \frac{x'-c(q')}{R} \ra^N \chi_{q'} \mathcal{F}_1^{-1} f_1 \|_{L^2}^\frac2n \right)  \Pi_{i=2}^{n+1} \| f_i \|^\frac2n_{L^2} \\
 \les_N & \left( \sum_{q' \in \calC \calH_{1}(R)} \la  \frac{d(\pi_{N_1}q,q')}{R} \ra^{-(2N-n^2)} \| \la \frac{x'-c(q')}{R} \ra^N \chi_{q'} \mathcal{F}_1^{-1} f_1 \|_{L^2}^2\right)^\frac1n  \Pi_{i=2}^{n+1} \| f_i \|^\frac2n_{L^2}.
\end{split}
\]
In justifying the last inequality, we have used the simple estimate for sequences 
\[
\| a_i \cdot b_i \|_{l^{\frac2{n}}_i} \les \| a_i \|_{l^2_i}  \| b_i \|_{l^{\frac2{n-1}}_i} 
\]
together with the straightforward estimate
\[
\| \la  \frac{d(\pi_{N_1}q,q')}{R} \ra^{-\frac{n^2}2} \|_{l^\frac{2}{n-1}_{q'}} \les 1. 
\]
Note that the previous inequality is \eqref{INS} with the improvement for $f_1$.
By repeating the procedure for all other terms $f_2,..,f_{n+1}$ to conclude with \eqref{INS}. 

Using \eqref{INS} we are ready to conclude the argument by invoking the discrete Loomis-Whitney inequality in \eqref{LWd}. 
We define the functions $g_i: \mathcal{L}(\calH_i) \rightarrow \R$ by
\[
g_i(\bj)= \left( \sum_{q' \in \calC \calH_{i}(R)} \la  \frac{d(q(\bj),q')}{R} \ra^{-(N-2n^2)} \| \la \frac{x'-c(q')}{R} \ra^N \chi_{q'} \mathcal{F}_i^{-1} f_i \|^2_{L^2} \right)^\frac{1}2, \bj \in \mathcal{L}(\calH_i) .
\]
From \eqref{SN}, it is easy to see that for $N$ large enough (depending only on $n$),
$g_i \in l^2(\Z^n)$ with
\[
\| g_i \|_{l^2(\mathcal{L}(\calH_i))} \ls \| f_i \|_{L^2}.
\] 
Using \eqref{LWd} we conclude that \eqref{INS} implies 
\[
\| \Pi_{i=1}^{n+1} \calE_i f_i \|_{L^{\frac2{n}}(Q)}  \ls A(R) \Pi_{i=1}^{n+1} \| f_i \|_{L^2}.
\]
Thus we obtain
\[
A(\delta^{-1} R) \leq C A(R)
\]
for a constant $C$ that is independent of $\delta$ and $R$. Iterating this gives $A(\delta^{-N} r) \leq C^{N} A(r)$.
Therefore $\max_{r \in [0,\delta^{-2}]} A(\delta^{-N} r) \leq C^N \max_{r \in [0,\delta^{-2}]} A(r)= C^N C(\delta)$. This is simply
obtained from the uniform pointwise bound
\begin{equation} \label{final}
\| \Pi_{i=1}^{n+1} \calE_i f_i \|_{L^\infty} \ls \Pi_{i=1}^{n+1}  \| \calE_i f_i \|_{L^\infty} \ls \Pi_{i=1}^{n+1}  \| f_i \|_{L^2}
\end{equation}
which is then integrated over arbitrary cubes of size $\leq \delta^{-2}$.

For  $R \in [\delta^{-N},\delta^{-N-1}]$, the above implies
\[
A(R) \leq C^N C(\delta) \leq R^\epsilon C(\delta)
\]
provided that $C^N \leq \delta^{-N\epsilon}$. Therefore choosing $\delta=C^{-\frac{1}{\epsilon}}$ leads to the desired result. 

\section{The induction argument for Theorem \ref{MB}} \label{SI2}

The proof follows the same steps as in the previous Section with some modifications.
 Note that \eqref{Lf} says something meaningful over \eqref{LAMRE} only if, in the language used above, $\mu \ll \delta$, or else the gain of $\mu^{\frac{n+1-k}2}$ is undistinguishable from $C(\epsilon)$ that is translated into $\tilde C(\delta)$.

For each $i=1,..,k$ we fix $\zeta_i^0 \in \Sigma_i(U_i)$, $N_i=N_i(\zeta_i^0)$ and let $\calH_i$ be the hyperplane on the physical side passing through the origin with normal $N_i$. We denote by $\pi_{N_i}$ the projection onto $\calH_i$ along $N_i$. Then, we choose a basis $N_i, i=k+1,..,n+1$ of the normal plane to $\calH$, let $\calH_i$ be the hyperplane on the physical side passing through the origin with normal $N_i$ and denote by $\pi_{N_i}$ the projection onto $\calH_i$ along $N_i$. 
 The set $\{ N_i \}_{i=1,..,n+1}$ is a basis of $\R^{n+1}$ and throughout this section the coordinates of a point 
are written in this basis.

Then as we described in Section \ref{NOT}, we construct the lattice $\mathcal{L}$, the set of parallelepipeds $\calC(r)$, the induced lattices 
$\calC \calH_i(r)$ in $\calH_i$ and the induced set of parallelepipeds $\calC \calH_i(r)$.

Next, a key point in the argument is that in the induction argument the localization at scale $\mu$ of $f_1$ is conserved exactly in all directions 
from $\calH^\perp$ and not through some margin process as in the proof of Theorem \ref{MBCT}. We now make this precise.

We work under the hypothesis that $U_i \subset B_i(0,\delta), i=2,..,k$, where $B_i(0,\delta)$ is the ball in the hyperplane $\calH_i$. For a function 
$f_i: \calH_i \rightarrow \C$ its margin is defined as before, see \eqref{defmrg}. 

We work under the hypothesis that $U_1 \subset B'(0,\delta) \times B''(0,\mu) \subset \calH_1$, where $B'(0,\delta)$ is the ball in the hyperplane $\calH_1 \cap \calH$ centered at the origin and of diameter $\delta$ and $B'(0,\mu)$ is the ball in the hyperplane $(\calH_1 \cap \calH)^\perp$ centered at the origin and of diameter $\mu$. For a function 
$f: \calH_1 \rightarrow \R$ its margin is define by
\[
\mbox{margin}^1(f) := \inf_{\xi''} \mbox{dist}(\mbox{supp}_{\xi'} (f(\cdot,\xi'')), B'(0,2\delta)^c), 
\]
where $\mbox{supp}_{\xi'}$ is the support of $f$ in the $\xi'$ variable. With these notations in place, we define

\begin{defin} Given $R \geq \delta^{-2}$ we define $A(R)$ to be the best constant for which the estimate
\begin{equation}
\| \Pi_{i=1}^{k} \calE_i f_i \|_{L^{\frac2{k-1}}(Q)} \leq A(R) \Pi_{i=1}^{k} \| f_i \|_{L^2} 
\end{equation}
holds true for all cubes $Q \in \calC(R)$, with $f_i$
obeying the margin requirement
\begin{equation} \label{mrpg2}
margin^i(f_i) \geq \delta-R^{-\frac12}
\end{equation}
and $f_1$ is supported in a neighborhood of size $\mu$ of $\calH \cap \calH_1 \subset \calH_1$.  
\end{defin}

We start with the cube $Q$ of size $\delta^{-1} R$ centered at the origin. For each $q \in \calC(R) \cap Q$, the induction hypothesis is the following:
\begin{equation} \label{IO3}
\| \Pi_{i=1}^{k} \calE_i f_i \|_{L^{\frac2{k-1}}(q)} \leq A(R) \Pi_{i=1}^{k} \| f_i \|_{L^2}. 
\end{equation}
As we did before, we will strengthen it, keeping in mind that we do not want to alter the support of $f_1$ in directions from $(\calH_1 \cap \calH)^\perp$. To do so we need a little more notation that goes along the lines of Section \ref{DLW}. Let $\pi=\pi_{N_1} \circ \pi_{N_{k+1}} \circ .. \circ \pi_{N_{n+1}}$.
We consider the subspace $\calH_1 \cap \calH$ of dimension $k-1$ and construct $\calC (\calH_1 \cap \calH)(r)=\pi \calC(r)$ be the set of parallelepipeds in 
$\calH_1 \cap \calH$ obtained by projecting parallelepipeds from $\calC(r)$. Their centers belong to the lattice $\mathcal{L} (\calH_1 \cap \calH)=\pi (\mathcal{L})$.
Based on this, we define $ \mathfrak{S}_1(r)$ to be the set of infinite parallelepipedical strips
$\mathfrak{s}=q \times (\calH_1 \cap \calH)^\perp \subset \calH_1$, where $q \in \calC (\calH_1 \cap \calH)(r)$. We denote
by $c(\mathfrak{s}):=c(q) \subset \mathcal{L}(\calH_1 \cap \calH)$ be the center of the strip.  We note that given $q_1, q_2 \in \calC \calH_1(r)$, then 
$\pi q_1, \pi q_2 \subset \calH_1$ belong to the same parallelepipedical strip in 
$\mathfrak{S}_1(r)$ if and only if $\pi q_1=\pi q_2$. For $q \in \calC_1(r)$, we let $\mathfrak{s} (\pi q)$ be the infinite parallelepipedical strip it belongs to as a subset in $\mathfrak{S}_1(r)$. Finally, given a strip $\mathfrak{s} \in \mathfrak{S}_1(r)$ we define $\chi_{\mathfrak{s} }: \calH_1 \rightarrow \R$
\[
\chi_{\mathfrak{s}}(x) = \chi_0^{k-1} (\mathcal{T}(\frac{\pi(x)-c(\mathfrak{s})}{r}))
\]
where $\chi_0^{k-1}: \R^{k-1} \rightarrow \R$ is entirely similar to the $\chi_0^n$ introduced in Section \ref{NOT}, expect that it acts on $\R^{k-1}$ instead of $\R^{n}$ and  $\mathcal{T} : \calH \cap \calH_1 \rightarrow \calH \cap \calH_1$
is the linear operator taking $\mathcal{L}(\calH \cap \calH_1)$ to the standard lattice $\Z^{k-1}$ in $\calH \cap \calH_1$. 
A key property of $\chi_{\mathfrak{s}}$ is that it does not depend on the variables $x_{k+1},..,x_{n+1}$, its coordinates in the subspace $\calH^\perp$.

Now we claim the following strengthening of \eqref{IO3}:
\begin{equation} \label{INS3}
\begin{split}
& \| \Pi_{i=1}^{k} \calE_i f_i \|_{L^{\frac2{k-1}}(q)} \\
\les_N &   A(R) \Pi_{i=2}^{k} 
\left( \sum_{q' \in \calC\calH_i(R)} \la \frac{d(\pi_{N_i} q,q')}R \ra^{-(2N-n^2) } 
\| \la \frac{x-c(q')}R \ra^{N} \chi_{q'} \mathcal{F}_i^{-1} f_i \|_{L^2}^2 \right)^\frac{1}2 \\
& \cdot
\left( \sum_{\mathfrak{s}' \in \mathfrak{S}_1(R)} \la \frac{d(\mathfrak{s} (\pi q),\mathfrak{s}')}R \ra^{-(2N-n^2) } \| \la \frac{\pi (x)-c(\mathfrak{s}')}R \ra^{N} \chi_{\mathfrak{s}'} \mathcal{F}_1^{-1} f_1 \|_{L^2}^2 \right)^\frac{1}2
\end{split}
\end{equation}
Here is very important in the second term above is that $\chi_{\mathfrak{s}'}$ does not depend on the $x_{k+1},..,x_{n+1}$ variables, thus
it does not affect the support of $f_1$ in the directions $\xi_{k+1},..,\xi_{n+1}$. As a consequence the margin requirements are propagated 
as required by the new definition. The argument for obtaining \eqref{INS3} is entirely similar to the one used to derive
\eqref{INS} with the only difference being that for the $\calE_1 f_1$ we use the multiplier 
\[
(x_2,..,x_{k})-c(\mathfrak{s}') - x_1 \nabla_{\xi_2,..,\xi_k} \varphi_1(\xi'_0)
\]
which is consistent with the fact that we do not want to alter the variables $x_{k+1},..,x_{n+1}$, so as to keep the support properties 
of $f_1$ intact in the directions of  $\xi_{k+1},..,\xi_{n+1}$.

As before, we define the functions $g_i:  \mathcal{L} (\calH_i) \rightarrow \R$ for $i=2,..,k$ by the same formula
\[
g_i(\bj)= \left( \sum_{q' \in \calC\calH_{i}(R)} \la  \frac{d(q(\bj),q')}{R} \ra^{-(2N-n^2) } \| \la \frac{x'-c(q')}{R} \ra^N \chi_{q'} \mathcal{F}_i^{-1} f_i \|_{L^2}^2 \right)^\frac12, 
\]
for $\bj \in \mathcal{L}(\calH_i)$, while $g_1: \mathcal{L}(\calH \cap \calH_1) \rightarrow \R$ by
\[
g_1(\bj)= \left( \sum_{\mathfrak{s}' \in \mathfrak{S}_1(R)} \la  \frac{d(\mathfrak{s} (\pi  q(\bj)),\mathfrak{s}')}{R} \ra^{-(2N-n^2) } \| \la \frac{\pi(x)-c(\mathfrak{s}')}{R} \ra^N \chi_{\mathfrak{s}'} \mathcal{F}_1^{-1} f_1 \|_{L^2}^2 \right)^\frac12, 
\]
for $\bj \in \mathcal{L}(\calH \cap \calH_1)$.Then as before we have
\[
\| g_i \|_{l^2(\mathcal{L}(\calH_i)} \ls \| f_i \|_{L^2}, \quad i=2,..,k
\] 
while
\[
\| g_1 \|_{l^2(\mathcal{L}(\calH \cap \calH_1))} \ls \| f_1 \|_{L^2}.
\]
Then we apply \eqref{LWd2} to conclude with
\[
\| \Pi_{i=1}^{k} \calE_i f_i \|_{L^{\frac2{k-1}}(Q)}  \ls A(R) \Pi_{i=1}^{k} \| f_i \|_{L^2}.
\]
From this point on we continue as in the previous argument. It is in the derivation of \eqref{final} that we pick the gain in $\mu$ from the support of $f_1$ in the directions 
from $\calH \cap \calH_1$ (which is not changed through the induction process) as follows:
\[
\| \calE_1 f_1 \|_{L^\infty} \ls \mu^{\frac{n+1-k}2} \| f_1 \|_{L^2}
\]
This finishes the proof.

\bibliographystyle{amsplain} \bibliography{HA-refs}

\end{document}